\def\eqalign#1{\,\vcenter{\openup\jot\m@th
  \ialign{\strut\hfil$\displaystyle{##}$&$\displaystyle{{}##}$\hfil
      \crcr#1\crcr}}\,}
\def\eqalignno#1{\displ@y \tabskip\@centering
  \halign to\displaywidth{\hfil$\displaystyle{##}$\tabskip\z@skip
    &$\displaystyle{{}##}$\hfil\tabskip\@centering
    &\llap{$##$}\tabskip\z@skip\crcr
    #1\crcr}}
\def\leqalignno#1{\displ@y \tabskip\@centering
  \halign to\displaywidth{\hfil$\displaystyle{##}$\tabskip\z@skip
    &$\displaystyle{{}##}$\hfil\tabskip\@centering
    &\kern-\displaywidth\rlap{$##$}\tabskip\displaywidth\crcr
    #1\crcr}}
\newdimen\pixel \pixel=.00333333 true in
\def\bigpar{\bigbreak\@afterindentfalse\@afterheading\ignorespaces}
\def\medpar{\medbreak\@afterindentfalse\@afterheading\ignorespaces}
\def\smallpar{\smallbreak\@afterindentfalse\@afterheading\ignorespaces}
\newlength{\saveindent}
\newenvironment{proof}%
      {\bigpar{\bf Proof:}\ %  previously \sentsp rather than \ 
             \setlength{\saveindent}{\parindent} %\setlength{\parindent}{0pt}%
                       \ignorespaces}%
\bigbreak \setlength{\parindent}{\saveindent}}
\bigbreak \setlength{\parindent}{\saveindent}}
\bigbreak \setlength{\parindent}{\saveindent}}
\bigbreak \setlength{\parindent}{\saveindent}}
\newenvironment{remark}%
      {\smallpar{\bf Remark:}\ %  **** \bigpar, and \bigbreak below ****
                       \ignorespaces}%
\medbreak \setlength{\parindent}{\saveindent}} 
\newenvironment{remark*}%
      {\smallpar{\bf Remark:}\ %  **** \bigpar, and \bigbreak below ****
                       \ignorespaces}%
\medbreak \setlength{\parindent}{\saveindent}} 
\medbreak \setlength{\parindent}{\saveindent}}
\newenvironment{remarks*}%
      {\smallpar{\bf Remarks:}\ % **** ditto ****
                       \ignorespaces}%
\medbreak \setlength{\parindent}{\saveindent}}
\medbreak \setlength{\parindent}{\saveindent}}
\medbreak \setlength{\parindent}{\saveindent}}
\newtheorem{theorem}{Theorem}%[section]    % Remove percent sign
\newtheorem{lemma}[theorem]{Lemma}
\newtheorem{proposition}[theorem]{Proposition}
\newtheorem{corollary}[theorem]{Corollary}
\newtheorem{example}{Example}
\def\begex{\begin{example}\parindent=0pt \rm}
\def\endex{\end{example}}
\def\square{\vbox{\hrule height.2pt\hbox{\vrule width.2pt height5pt \kern5pt
                                   \vrule width.2pt} \hrule height.2pt}}
\def\stopproof{\hfill \square \smallskip}
\def \W {{\cal L}}
\def \G {{Q}}
\def\half{{\textstyle{1\over2}}}
\def \vertices {{V}}
\def \r {{\bf R}}
\def \var {{ \rm var }}
\def \gt {{\tilde G}}
\def \dS {{\partial S}}
\def\r|{{\Bigr\vert}}
\def\l|{{\Bigl\vert}}
\def \R {{\bf R}}
\def \bad {{\cal B}}
\def \good {{\cal G}}
\def\phi {\Phi}
\def\e{\epsilon}
\def\twosubs#1#2{\scriptstyle#1\atop\scriptstyle#2}
\def \Z {{\bf Z}}
\def\config {{\it S}_n}
\def\varepsilon{\mathchar"122 }
\def \Epsilon {{\cal E}}
\def\e {{ \bf {E}}}
\def\Sq{{\cal S}_q}
\def\Sq-{{\cal S}_{q-1}}
\def \IP {{\rm IP}}
\def \RW {{\rm RW}}
\newcommand{\be}{\begin{equation}}
\newcommand{\ee}{\end{equation}}
\newcommand{\lab}{\label}
\begin{document}
\title{Spectral gap for the interchange process in a box}
\author{
{\sc Ben Morris}\thanks{Department of Mathematics,
University of California, Davis CA 95616.  
\newline Email:
{\tt morris@math.ucdavis.edu}.  
Partially supported by Sloan Fellowship and 
NSF grant DMS-0707144.}
}
\maketitle
\thispagestyle{empty}
\begin{abstract}
\noindent
We show that the 
spectral gap for the interchange process (and the
symmetric exclusion process) in a
$d$-dimensional box 
of side length $L$ is asymptotic to $\pi^2/L^2$.
This gives more evidence in favor of 
Aldous's conjecture that in any graph the spectral gap for the interchange 
process is the same as the spectral gap for a corresponding 
continuous-time random walk. Our proof uses a technique that is similar to that
used by Handjani and Jungreis, who proved that Aldous's conjecture holds when 
the graph is a tree. 
\end{abstract}

%------------------------------------------------------------------------- 
\section{Introduction}
\subsection{Aldous's conjecture}
%\subsection{Basic definitions}
This subsection is taken (with minor alterations) from Davis Aldous's web page.
Consider an $n$-vertex graph $G$ which is connected and undirected. 
Take $n$ particles labeled $1,2,...,n$. In a  configuration, there is
one 
particle at each vertex. The {\it interchange process} is the following 
continuous-time Markov chain on configurations. For each edge $(i,j)$, 
at rate $1$ the particles at vertex $i$ and vertex $j$ are interchanged.

The interchange process is reversible, and its stationary distribution
is 
uniform on all $n!$ configurations. There is a spectral gap 
$\lambda_{\IP}(G) > 0$, which is the absolute value of the largest non-zero eigenvalue of
the transition rate matrix. If instead we just watch a single
particle, 
it performs a continuous-time random walk on $G$ (hereafter
referred to simply as ``the continuous-time random walk on $G$''),
which is also
reversible 
and hence has a spectral gap $\lambda_{\RW}(G) > 0$. 
Simple arguments 
(the contraction principle) show $\lambda_{\IP}(G) \leq \lambda_{\RW}(G) $.
\\
\\
{\bf Problem}. Prove $\lambda_{\IP}(G) = \lambda_{\RW}(G) $ for all
$G$.\\
\\
{\bf Discussion.} Fix $m$ and color particles $1,2,...., m$ red. 
Then the red particles in the interchange process behave as the usual
exclusion process (i.e., $m$ particles performing the continuous-time
random walk on $G$, but with moves that take two particles to the 
same vertex suppressed).
But in the finite setting, the interchange process
seems more natural. 
\subsection{Results}  
Aldous's conjecture has been proved in the case where $G$ is a tree
\cite{hj} and in the case where $G$ is the complete 
graph \cite{ds}; see also \cite{ns}.  
In this note we prove an asymptotic version
of Aldous's conjecture for
$G$ a box in $\Z^d$. We show that if $B_L$ denotes a box of 
side length $L$ in $\Z^d$
%(i.e., $\{0, 1, \dots, L\}^d$) 
then 
\[
{\lambda_{\IP}(B_L) \over \lambda_{\RW}(B_L)} \to 1,
\]
as $L \to \infty$. \\
\\ 
\begin{remark}
After completing a draft of this paper, I learned that Starr and Conomos had
recently obtained the same 
result  
(see \cite{starr}). Their proof uses a similar approach, although the present paper 
is somewhat shorter. 
\end{remark}
 
{\bf Connection to simple exclusion. }
Our result gives a bound on the 
spectral gap for the exclusion process. The exclusion process 
is a widely studied Markov chain, with 
connections to
card shuffling \cite{wilson, bbhm}, 
statistical mechanics \cite{kov, q, cm, thomas}, and a variety of other
processes (see e.g., \cite{liggett, fill}); it
has been one of the major examples behind 
the study of convergence rates for Markov chains (see, e.g., \cite{fill,
  ct, wilson, bbhm}). 
Our result implies that the spectral gap for the symmetric
exclusion process in  $B_L$ is 
asymptotic to $\pi^2/L^2$.
The problem of bounding the spectral gap for simple 
exclusion was studied in
Quastel \cite{q} and a subsequent independent paper of
Diaconis and Saloff-Coste \cite{ct}.
Both of these papers 
used a comparison to Bernoulli-Laplace diffusion 
(i.e., the exclusion process in the complete graph) to 
obtain a bound of order $1/dL^2$. 
Diaconis and Saloff-Coste explicitly wondered whether 
the factor $d$ in the denominator is necessary; in the 
present paper we show that it is not.
\section{Background}
Consider a continuous-time Markov chain on a finite state space $W$
with a symmetric transition rate matrix $Q(x,y)$.  The
spectral gap is the minimum value of $\alpha >0$ such that \be
\label{eigen}
\G f = -\alpha f,
\ee
for some $f: W \to \R$. The spectral gap governs the asymptotic rate of convergence 
to the stationary distribution. 
Define
\[
\Epsilon(f,f) = {1 \over 2|W|} \sum_{x, y \in W} (f(x) - f(y))^2 Q(x,y),
\]
and define
\[
\var(f) = {1 \over |W|} \sum_{x \in W}  (f(x) - \e(f))^2,
\]
where 
\[
\e(f) = {1 \over |W|} \sum_{x \in W} f(x).
\]
If $f$ is a function that 
satisfies $\G f = - \lambda f$ for some $\lambda > 0$, then
\be
\label{fund}
\lambda = { \Epsilon(f, f) \over \var(f)}.
\ee
Furthermore, 
if $\alpha$ is the spectral gap then
for any non-constant $f: W \to \R$ we have
\be
\label{variational}
{\Epsilon(f,f) \over 
\var(f)} \geq \alpha. 
\ee
Thus the spectral gap can be obtained by minimizing the left hand 
side of (\ref{variational}) over all non-constant functions $f: W \to \R$.

%Suppose that $X_n$ is a finite Markov chain on state space $S$ 
%with a symmetric transition rate matrix $\G$.
%If $U \subset S$, we write $\G(x, S)$ for $\sum_{y \in S} \G(x, y)$
%and $\G(S, y)$ for $\sum_{x \in S} \G(x, y)$.
\section{Main result} 
Before specializing to the interchange process, we first prove 
a general proposition relating the
eigenvalues of a certain function of a Markov chain to 
the eigenvalues of the Markov chain itself.
Let $X_t$ be a  continuous-time Markov chain on a finite state space $W$
with a symmetric transition rate matrix $Q(x,y)$. 
Let $T$ be another space and 
let $g: W \to T$ be a function on $W$ such that
if $g(x) = g(y)$ and $U = g^{-1}(u)$ for some $u$, then
$\sum_{u' \in U} \G(x, u') = \sum_{u' \in U} \G(y, u')$. 
Note that $g(X_t)$ is a Markov chain. 
Let $W'$ denote the collection of 
subsets of $W$ of the 
form $g^{-1}(u)$ for some $u \in T$. 
We can identify the states of $g(X_n)$ with 
elements of $W'$.
Let $\G'$ denote the transition 
rate matrix for $g(X_n)$. Note that
if $U, U' \in W'$, with 
$U = g^{-1}(u)$ for some $u \in T$ and
$U \neq U'$, then 
$\G'(U,U') = \sum_{y \in U'} \G(u,y)$.  

We shall need the following proposition, which generalizes Lemma
2 of \cite{hj}.
\begin{proposition}
\label{ezlemma}
Let $X_t$, $g$ and $\G'$ be as defined above.
%Suppose that
%$g(X_n)$ has spectral gap $\alpha$.
Suppose $f: W \to \R$ is an eigenvector of $\G$ 
with corresponding eigenvalue $-\lambda$ and
define $h: W' \to \R$ by $h(U) = 
\sum_{x \in U} f(x)$.
Then $\G' h = -\lambda h$. That is, either $h$ is an eigenvector 
of $\G'$ with corresponding eigenvalue $-\lambda$, or $h$ is
identically 
zero.
\end{proposition}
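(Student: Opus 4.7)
The plan is to prove the result fiber by fiber, deducing the eigenvalue identity for $h$ from that for $f$. Fix $U \in W'$, apply $(\G f)(x) = -\lambda f(x)$ at each $x \in U$, and sum:
\[
\sum_{x \in U}(\G f)(x) \;=\; -\lambda\, h(U).
\]
The goal therefore reduces to showing that the left-hand side equals $(\G' h)(U)$.

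Expanding and interchanging the order of summation,
\[
\sum_{x \in U}(\G f)(x) \;=\; \sum_{U' \in W'} \sum_{y \in U'} f(y) \sum_{x \in U} \G(x, y).
\]
The key identity to establish is $\sum_{x \in U} \G(x,y) = \G'(U,U')$ whenever $y \in U'$. For $U' \neq U$ I would first use the symmetry of $\G$ to rewrite the inner sum as $\sum_{x \in U} \G(y,x)$, the total rate from $y$ into $U$; the lumpability hypothesis then gives that this depends only on the fiber $U'$ containing $y$, and symmetry of the lumped rates (following from $\G$ being symmetric together with the equal-fiber-size structure of the proposition's intended setting) identifies the value as $\G'(U,U')$. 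The subtle step is the diagonal case $U' = U$: here I plan to use the row-sum-zero identity for $\G$ to re-express $\sum_{x \in U} \G(y, x) = -\sum_{x \notin U} \G(y, x)$, split the complement into its constituent fibers, apply lumpability piecewise, and recognize the result as $\G'(U,U)$ via the corresponding row-sum identity for $\G'$.

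With the identity in hand the inner $y$-sum collapses to $h(U')$, the outer sum becomes $(\G' h)(U) = \sum_{U'} \G'(U, U') h(U')$, and combining with the first display yields $(\G' h)(U) = -\lambda h(U)$, as required; the alternative that $h$ vanishes identically is already built into the statement. The main obstacle is the diagonal bookkeeping for $U' = U$, which requires carefully separating the row-sum-zero property of $\G$ from the lumpability hypothesis, but is otherwise routine.
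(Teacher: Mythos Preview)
Your argument is correct and is essentially the same computation as the paper's, just run in the opposite direction: the paper starts from $(Q'h)(U')$, substitutes $h(U)=\sum_{x\in U}f(x)$ and the lumpability identity $Q'(U,U')=\sum_{y\in U'}Q(x,y)$ (which already covers the diagonal $U=U'$ via the row--sum--zero property you single out), collapses the double sum using symmetry of $Q$ to $\sum_{y\in U'}(Qf)(y)$, and reads off $-\lambda h(U')$. Because you begin at the other end you are forced to establish the transposed identity $\sum_{x\in U}Q(x,y)=Q'(U,U')$ and hence to invoke symmetry of $Q'$ (equal fiber sizes); that extra step and the separate diagonal bookkeeping are avoided by the paper's ordering, but the substance is identical.
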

\begin{proof}
Note that for all $U' \in W'$ we have
\begin{eqnarray*}
(\G' h) (U') &=&
\sum_{U \in W'} h(U) \G'(U, U') \\
&=& \sum_{U \in W'} \sum_{x \in U} f(x) \sum_{y \in U'}
\G(x, y) \\
&=& \sum_{y \in U'} (\G f) (y) \\
&=& -\lambda \sum_{y \in U'} f(y) \\
&=& -\lambda h(U'),
\end{eqnarray*}
so $\G' h = -\lambda h$.
\end{proof}
\noindent

The following Lemma is a weaker version of Aldous's conjecture.
%about the interchange process, 
The proof is 
similar to the proof of Theorem 1 in \cite{hj}. 
%Let $\pi_t$ denote the configuration  at time $t$.
\\
%Fix a graph $G$ and let
%$\{\pi_t : t \geq 0\}$ be the interchange process on $G$. 
\begin{lemma}
\label{maintheorem}
Let $G$ be a connected, undirected graph with vertices
labeled $1, \dots, n$.
For $2 \leq k \leq n$ let $G_k$ be the subgraph of $G$ 
induced by the vertices $1, 2, \dots, k$. Let $\lambda_{\RW}(G_k)$
be the spectral gap for the continuous-time random walk on 
$G_k$, and define $\alpha_k = \min_{2 \leq j \leq k}
\lambda_{\RW}(G_j)$. Then
\[
\lambda_{\IP}(G) \geq \alpha_n.
\]
\end{lemma}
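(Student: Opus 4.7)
My plan is to prove the lemma by strong induction on $n$. The base case $n=2$ is immediate since then $G=K_2$ and $\lambda_{\IP}(K_2)=\lambda_{\RW}(K_2)=2=\alpha_2$. For the inductive step, I would start with an eigenvector $f$ of $Q_G$ with eigenvalue $-\lambda_{\IP}(G)=-\lambda$, and exploit the fact that $Q_G$ commutes with the left $S_n$-action on permutations (relabeling particles): for each transposition $(k,n)$, the function $\rho_k f(\sigma):=f((k,n)\sigma)$ is again an eigenvector of $Q_G$ with eigenvalue $-\lambda$. I would apply Proposition \ref{ezlemma} to each $\rho_k f$ using the projection $g(\sigma)=\sigma^{-1}(n)$ (whose target chain is the CTRW on $G=G_n$). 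A substitution $\sigma'=(k,n)\sigma$ shows the projected function is
\[
h_k(v):=\sum_{\sigma:\sigma^{-1}(k)=v} f(\sigma),
\]
so by the proposition each $h_k$ is either an eigenvector of the CTRW on $G$ with eigenvalue $-\lambda$ or is identically zero.

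The argument then splits. If $h_k\not\equiv 0$ for some $k$, then $\lambda\geq\lambda_{\RW}(G_n)\geq\alpha_n$ and we are done. Otherwise $h_k\equiv 0$ for every $k$; evaluating at $v=n$ yields $\sum_{\sigma:\sigma(n)=k}f(\sigma)=0$ for each $k$. Writing $C_k:=\{\sigma\in S_n:\sigma(n)=k\}$, this forces the conditional means $\bar f_{C_k}$ (and hence $\bar f$) to vanish. Now split
\[
\Epsilon_G(f,f)=A(f,f)+B(f,f),
\]
where $A(f,f)=\frac{1}{2n!}\sum_\sigma\sum_{(u,v)\in E(G_{n-1})}(f(\sigma)-f(\sigma^{(u,v)}))^2$ uses only edges of $G_{n-1}$ and $B(f,f)\geq 0$ uses edges incident to $n$. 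Because swaps along edges of $G_{n-1}$ leave $\sigma(n)$ fixed, each class $C_k$ is invariant under the $A$-dynamics, and on $C_k$ this dynamics is exactly the interchange process on $G_{n-1}$ (acting on the $n-1$ particles other than $k$). The inductive hypothesis applied to $G_{n-1}$, with its natural labeling inherited from $G$, gives spectral gap at least $\alpha_{n-1}$, so the variational characterization $(\ref{variational})$ yields
\[
A(f,f)\ \geq\ \alpha_{n-1}\sum_k\frac{|C_k|}{n!}\Var_{C_k}(f)\ =\ \alpha_{n-1}\,\Var_G(f),
\]
where the equality uses $\bar f_{C_k}=\bar f=0$. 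This gives $\lambda=\Epsilon_G(f,f)/\Var_G(f)\geq\alpha_{n-1}\geq\alpha_n$.

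I expect the main obstacle will be pinpointing the correct partition of $S_n$ to use in the ``$h\equiv 0$'' case. Applying Proposition \ref{ezlemma} directly to $f$ only forces mean zero over the fibers $\{\sigma:\sigma^{-1}(n)=v\}$ (where particle $n$ sits at $v$), and these fibers are \emph{not} preserved by swaps along $E(G_{n-1})$, since such a swap can move whichever particle happens to occupy vertex $n$. The partition that the induction actually needs is by $C_k=\{\sigma:\sigma(n)=k\}$, and it is precisely the $S_n$-equivariance of the eigenspace---exercised by projecting every $\rho_k f$ rather than just $f$---that converts the fiber-wise vanishing into the class-wise vanishing required to conclude.
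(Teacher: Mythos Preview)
Your proof is correct and follows the same inductive two-case strategy as the paper, with the same partition $C_k=\{\sigma:\sigma(n)=k\}$ (the paper's $W_k$) in the second case. The one simplification the paper makes is that it never invokes the left $S_n$-action: since for \emph{every} particle label $k$ the map $\sigma\mapsto\sigma^{-1}(k)$ (position of particle $k$) is itself a lumpable projection onto the continuous-time random walk on $G$, the paper simply applies Proposition~\ref{ezlemma} directly with that projection for each $k$ and obtains all of the $h_k$ at once---so the equivariance detour you flag as the ``main obstacle'' is not actually needed.
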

\begin{proof}
Our prooof will be by induction on the number of vertices $n$.
The base case $n=2$ is trivial, so assume $n > 2$. 
Let $W$ and $\G$ be the state space and transition rate matrix, respectively, for 
the interchange process on $G$.
%Fix $f$ with $\e(f) = 0$.
Let $f: W \to \R$
be a function that satisfies
$\G f = - \lambda f$.
We shall show that $\lambda \geq \alpha_n$.  
Note that a configuration of the interchange process can 
be identified with a permutation $\pi$ in $\config$, where
if particle $i$ is in vertex $j$, then $\pi(i) = j$. 
For positive integers $m$ and $k$
with $m,k \leq n$, we 
write
$f( \pi(m) = k)$ for 
\[
\sum_{\pi: \pi(m) = k} f(\pi).
\]
We consider two cases.\\
\\
{\bf Case 1: } For some $m$ and $k$ we have $f(\pi(m) = k) \neq 0$.
Define $h: V \to \R$ by $h(j) = f(\pi(m) = j)$.
Then $h$ is not identically  zero,
and using
Proposition \ref{ezlemma} 
with $g$ defined by $g(\pi) = \pi(m)$
gives that 
if $\G'$ is the transition rate matrix for continuous time 
random walk on $G$, then $\G' h = - \lambda h$.
It follows that $\lambda$ is an eigenvalue of $\G'$ and 
hence $\lambda \geq \lambda_{\RW}(G) = \alpha_n$. \\
\\
{\bf Case 2:} For all 
$m$ and $k$ we have $f(\pi(m) = k) = 0$.
Define the {\it suppressed process} as the interchange process with 
moves involving vertex $n$ suppressed. That is, the Markov chain with the following transition rule: \\
\\
For every edge $e$ not incident to $n$, at rate $1$ 
switch the particles at the endpoints of $e$. \\
\\
For $1 \leq k \leq n$, let $W_k = \{ \pi \in W: \pi^{-1}(n) = k\}$. Note that
the $W_k$ are the 
irreducible classes 
of the suppressed process, and 
that for each $k$
the restriction of the suppressed process to $W_k$ can be identified with the interchange 
process on $G_{n-1}$. 
For $k$ with $1 \leq k \leq n$, define
\[
\Epsilon_k(f,f) = {1 \over 2(n-1)!} \sum_{\pi_1, \pi_2 \in W_k} (f(\pi_1) - f(\pi_2))^2 Q(\pi_1,\pi_2),
\]
and define
\[
\var_k(f) = {1 \over (n-1)!} \sum_{\pi \in W_k}  f(\pi)^2.
\]
(Note that for every $k$ we have
$\sum_{\pi \in W_k} f(x) = 0$.)

By the induction hypothesis, the spectral gap for the interchange process 
on $G_{n-1}$ is at least $\alpha_{n-1}$. Hence
for every $k$ with $1 \leq k \leq n$ 
we have
\[
\Epsilon_k(f,f) \geq \alpha_{n-1} \var_k(f) \geq \alpha_n \var_k(f).
\]
It follows that
\begin{eqnarray}
n! \Epsilon(f,f) &\geq& \half \sum_{k=1}^n 
\sum_{\pi_1, \pi_2 \in W_k} (f(\pi_1) - f(\pi_2))^2 \G(\pi_1, \pi_2) \\
&=&  \sum_{k=1}^n (n-1)! \Epsilon_k(f,f) \\
\label{tq}
&\geq& \sum_{k=1}^n  \alpha_{n} 
(n-1)! \var_k(f) \\
&=&\alpha_{n} \sum_{k=1}^n \sum_{\pi \in W_k}
f(\pi)^2 \\
&=& \alpha_{n} n! \var(f).
\end{eqnarray}
Combining this with 
equation (\ref{fund})
gives
$\lambda \geq \alpha_n$.
%This completes the proof. 
\end{proof}
\begin{remark}
Theorem \ref{maintheorem} is optimal if the vertices are 
labeled in such a way that 
%$G_1$, \dots, $G_n$ are such that
$\lambda_{\RW}(G_k)$ 
is  nonincreasing in $k$, 
in which case it gives $\lambda_{\IP}(G) = 
\lambda_{\RW}(G)$. Since any tree can be built up from smaller trees
(with larger spectral gaps), we recover the 
result proved in \cite{hj} that
$\lambda_{\IP}(T) = 
\lambda_{\RW}(T)$ if $T$ is a tree. 
\end{remark}

Our main application of Lemma \ref{maintheorem} is the 
following asymptotic version of Aldous's conjecture in the special 
case where $G$ is a box in $\Z^d$. 

\begin{corollary}
Let $B_L = \{0, \dots, L\}^d$ be a box of side length $L$ in 
$\Z^d$. Then the spectral gap for the interchange process on $B_L$ is
asymptotic to $\pi^2/L^2$.
\end{corollary}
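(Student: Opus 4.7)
The plan is to apply Lemma \ref{maintheorem} to $B_L$ with a carefully chosen labeling of its vertices, and match the upper bound from the contraction principle. Since $B_L$ is the Cartesian product of $d$ copies of the path on $L+1$ vertices, the product structure gives $\lambda_{\RW}(B_L) = 2(1-\cos(\pi/(L+1))) \sim \pi^2/L^2$, and the contraction principle immediately gives $\lambda_{\IP}(B_L) \leq \lambda_{\RW}(B_L)$.

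For the lower bound, I would label the vertices of $B_L$ in $L^\infty$-shells centered at the origin: first $(0,\ldots,0)$, then $\{0,1\}^d \setminus \{(0,\ldots,0)\}$, then $\{0,1,2\}^d \setminus \{0,1\}^d$, and so on, with some convenient order fixed within each shell. With this labeling, every ``milestone'' prefix---one in which a shell has just been completed---is an axis-aligned sub-box $\{0,\ldots,s\}^d$ for some $s \leq L$, and the product formula for spectral gaps gives $\lambda_{\RW}(\{0,\ldots,s\}^d) = 2(1-\cos(\pi/(s+1))) \geq \lambda_{\RW}(B_L)$, since paths on fewer vertices have strictly larger gap.

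The substantive step is handling the intermediate prefixes $G_k = \{0,\ldots,s\}^d \cup T$, where $T$ is a proper nonempty subset of the next shell. The plan is to compare each such $G_k$ to the enclosing sub-box $B^+ = \{0,\ldots,s+1\}^d$ (with $s+1 \leq L$, so $\lambda_{\RW}(B^+) \geq \lambda_{\RW}(B_L)$): given a test function $f$ on $G_k$, define an extension $\tilde f$ on $B^+$ by assigning each missing vertex the value of $f$ at a nearby vertex of $G_k$, and show that $\Epsilon_{B^+}(\tilde f,\tilde f) \leq (1+o_L(1))\Epsilon_{G_k}(f,f)$ together with $\var_{B^+}(\tilde f) \geq (1-o_L(1))\var_{G_k}(f)$. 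The variational characterization of $\lambda_{\RW}(B^+)$ then yields $\lambda_{\RW}(G_k) \geq (1-o_L(1))\lambda_{\RW}(B^+) \geq (1-o_L(1))\lambda_{\RW}(B_L)$. Feeding all of these prefix bounds into Lemma \ref{maintheorem} gives $\lambda_{\IP}(B_L) \geq (1-o_L(1))\lambda_{\RW}(B_L) \sim \pi^2/L^2$, completing the proof.

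The main obstacle is the Dirichlet form comparison with tight constants. A bound based purely on graph diameter would yield only $\Omega(1/L^2)$ and lose the precise factor $\pi^2$; recovering the sharp asymptotic requires exploiting that the missing portion $B^+ \setminus G_k$ is thin (of size $O((s+1)^{d-1})$ against a total of $(s+2)^d$) and that the low-frequency eigenfunctions of $B^+$ vary slowly at that scale. One also needs to handle small $s$ separately: there $\lambda_{\RW}(B^+)/\lambda_{\RW}(B_L) = \Theta(L^2/s^2)$ is large, so even a crude comparison suffices and tight constants are only needed when $s$ is comparable to $L$, which is precisely the regime in which the missing shell fragment is proportionally thin.
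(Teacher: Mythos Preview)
Your high-level strategy matches the paper's: both invoke Lemma~\ref{maintheorem} with a carefully chosen vertex labeling and reduce the problem to lower-bounding $\lambda_{\RW}$ of the intermediate induced subgraphs by $(1-o(1))\gamma_L$. However, the paper's labeling is different from yours in a way that matters. Rather than $L^\infty$-shells, the paper builds $B_L$ from $B_{L-1}$ one coordinate at a time, passing through the milestones $H_k = \W_L^{k}\times\W_{L-1}^{d-k}$; in this scheme every newly added vertex is adjacent to the previous milestone. This lets the paper replace each intermediate graph by a single ``worst-case'' graph $G'$ (obtained by keeping only edges touching $H_{k-1}$) via the monotonicity of $\lambda_{\RW}$ under adding edges and deleting pendant edges, and then bound $\lambda_{\RW}(G')$ directly: project to the $k$th coordinate using Proposition~\ref{ezlemma} (so if $\lambda<\gamma_L$ one gets $\sum_{x\in S}f(x)=0$ for free), apply the variational inequality on $H_{k-1}$, and absorb the boundary slab by a short ``good/bad'' argument with a fixed depth~$M$. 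Your shell labeling forfeits both of these devices: corner vertices of a shell are not adjacent to the inner box, so the analogue of $G'$ is disconnected, and there is no single coordinate to project onto.

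The gap in your plan is the step you flag as the main obstacle. You propose to extend the minimizing $f$ on $G_k$ to $\tilde f$ on $B^+$ and show $\Epsilon_{B^+}(\tilde f,\tilde f)\le (1+o(1))\Epsilon_{G_k}(f,f)$, but a neighbor-copying extension only yields $\Epsilon_{B^+}(\tilde f,\tilde f)\le C_d\,\Epsilon_{G_k}(f,f)$ for a dimension-dependent constant, since the new edge contributions are controlled by the Dirichlet energy of $f$ on an $O(1)$-neighborhood of the boundary---and nothing prevents that boundary energy from being a fixed fraction of the total. Your proposed fix, that ``low-frequency eigenfunctions of $B^+$ vary slowly,'' does not apply: the eigenfunction in question lives on $G_k$, not on $B^+$, and you have no a priori regularity for it; invoking smallness of its eigenvalue is exactly what you are trying to prove. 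The paper sidesteps this entirely by never comparing to an enclosing box---it instead shows directly that any eigenvalue $\lambda<\gamma_L$ of $G'$ would force the eigenfunction to have mean zero on $S=V(H_{k-1})$, and then uses \eqref{eqb}--\eqref{eqa} to conclude $\lambda\ge(1-\epsilon)\gamma_L$. If you want to salvage the shell labeling, you would need an argument of that flavor (e.g.,\ an analogue of Proposition~\ref{ezlemma} plus a good/bad decomposition adapted to the shell geometry), not a test-function extension.
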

\begin{proof}
In order to use Lemma \ref{maintheorem} we need to
label the vertices of $B_L$ in some way.
Our goal is to label in such a way that 
for every $k$ the quantity 
$\lambda_{\RW}(G_k)$ 
(i.e., the spectral gap corresponding to
the subgraph of $B_L$ induced by the vertices
$1, \dots, k$) 
is not too much smaller than
$\lambda_{\RW}(B_L)$. 
So our task is to build $B_L$, one vertex at a time, in such
a way that the spectral gaps of the intermediate graphs 
don't get too small. 

We shall build $B_L$ by inductively building $B_{L-1}$ and 
then building $B_L$ from $B_{L-1}$.
Since $\lambda_{\RW}(B_L) \downarrow 0$, it is enough to show that
\[
{\beta_L \over \lambda_{\RW}(B_L)} \to 1,
\]
where $\beta_L$ is the minimum spectral gap for any intermediate
graph between $B_{L-1}$ and $B_{L}$.

 For a graph $H$, let $\vertices(H)$ denote the set of vertices in $H$.
For $j \geq 0$, let $\W_j = \{0, \dots, j\}$ be the line 
graph with $j + 1$ vertices. 
Define $\gamma_L = \lambda_{\RW}(\W_L)$. It is well known that
$\gamma_L$ is decreasing in $L$ and asymptotic to $\pi^2/L^2$ 
as $L \to \infty$. 
It is also well known that if $H$ and $H'$ are graphs and $\times$ 
denotes Cartesian product, then 
$\lambda_{\RW}(H \times H') = 
\min(\lambda_{\RW}(H),\lambda_{\RW}(H'))$. 
Since $B_L = \W_L^d$, 
it follows that 
$\lambda_{\RW}(B_L) = \gamma_L$.

 We construct $B_{L}$ from $B_{L-1}$ 
using intermediate graphs
$H_0, \dots, H_d$, where for $k$ with $1 \leq k \leq d$ we define
$H_k = \W_L^{k} \times 
\W_{L-1}^{d - k}$. Note that
$H_0 = B_{L-1}$ and $H_d = B_{L}$. We obtain
$H_k$ from $H_{k-1}$  
by adding vertices to lengthen $H_{k-1}$ 
by one unit in direction $k$.  The order in which the vertices 
in $\vertices(H_k) - \vertices(H_{k-1})$ are added is 
arbitrary. 

 Fix $k$ with $1 \leq k \leq d$, and define $G' = G'(L,k)$ as 
follows.
Let 
\[
V' = \vertices(H_k),
\hspace{.4 in}
E' = \{(u, v) : \mbox{either $u$ or $v$ is a vertex in $H_{k-1}$}\},
\]
and let $G' = (V', E')$. 
It is well known and easily shown that if $H$ is a graph, then adding edges to $H$ 
cannot decrease $\lambda_{\RW}(H)$, nor can removing pendant edges. 
Since each intermediate graph $\gt$ 
between $H_{k-1}$ and $H_k$ can be obtained from $G'$ by adding edges and
removing pendant edges,  it follows that for 
any such graph $\gt$ we have $\lambda_{\RW}(\gt) \geq \lambda_{\RW}(G')$. 
Thus, it is enough to bound $\lambda_{\RW}(G')$ from below.
We shall show that for any $\epsilon > 0$ we
have $\lambda_{\RW}(G'(L,k)) \geq (1-\epsilon) \gamma_L$ if $L$ 
is sufficiently large.

 Let $e_k$ be the unit vector in direction $k$.
Let 
\[
S = \vertices(H_{k-1}); \hspace{.4 in}  %S' = \vertices(G'); 
%5\hspace{.4 in}
\dS = V' - S.
\]
Let $X_t$ be the continuous-time random walk on 
$G'$, with transition rate matrix $\G$.
Fix 
$f: V' \to \R$ with $\G f = -\lambda f$ for 
some $\lambda > 0$. For $x \in \Z^d$, let $g_k(x)$ denote the component
of
$x$ in the $k$th coordinate. Note that $g_k(X_t)$ is the 
continuous-time random walk on $\W_L$. 
Let $\G'$ be the transition rate matrix for 
$g_k(X_t)$.
Proposition \ref{ezlemma} 
implies that if $h: \{0, \dots, L\} \to \R$
is defined by 
$h(j) = 
\sum_{\twosubs{x \in V'}{g_k(x) = j}} f(x)$,
then
$\G' h = - \lambda h$.
Thus if $\lambda < \gamma_L$, then $g$ is identically zero
and hence $\sum_{x \in S} f(x) = 0$. 
Define
\[
\Epsilon(f,f) = {1 \over 2|V'|} \sum_{x, y \in V'} (f(x) - f(y))^2 Q(x,y),
\]
and let $\Epsilon_S(f,f)$ be defined 
analogously, but with only vertices in $S$ included in the sum. 
Note that $\Epsilon(f,f) \geq \Epsilon_S(f,f)$. 
Since $\sum_{x \in S} f(x) = 0$, we have
\be
\label{eqb}
{ \Epsilon_S(f,f) \over \sum_{x \in S} f(x)^2}
\geq \lambda_{\RW}(H_{k-1}) \geq \gamma_L,
\ee
where the second inequality follows from the fact that
$H_{k-1}$
is a Cartesian product of $d$ graphs, each of which is either $\W_{L-1}$ 
or $\W_L$.
%\be
%\lab{eqb'}
%\lambda_{\RW}(H_{k-1}) \geq \gamma_L.
%\ee

Fix $\epsilon > 0$ and let 
$M$ be a positive integer large enough so that
$(1 - 4M^{-1})^{-1} \leq (1 - \epsilon)^{-1/2}$.
For each $x \in \dS$, say that $x$ 
is {\it good} if there is a $y \in S$ such that $x = y + ie_k$ for some 
$i \leq M$ and $|f(y)| \leq |f(x)|/2$. Otherwise say that $x$ is
{\it bad}. Let $\good$ and $\bad$ denote the set of good and bad
vertices, respectively, in $\dS$. 
Note that if $x$ is bad and $M \leq L$ then
$f(x)^2 \leq {4 \over M} \sum_{j = 1}^M f(x - j e_k)^2$. Summing this over 
bad $x$ gives
\be
\lab{eqc}
\sum_{x \in \bad}
f(x)^2 \leq {4 \over M} \sum_{x \in V'} f(x)^2
\ee
Note that
if $x$ is good, then there must be an 
$x' \in S$ of the form $x - ie_k$ 
such that 
$|f(x') - f(x' + e_k)| > f(x)/2M$. 
It follows that 
\be
\lab{eqa}
{\Epsilon(f,f) \over \sum_{x \in \good} f(x)^2} 
\geq 1/4M^2.
\ee
Since $V' = S \cup \bad \cup \good$, combining 
equations (\ref{eqa}), (\ref{eqb}) and
(\ref{eqc}) gives
\[
\sum_{x \in V'} f(x)^2
\leq (\gamma_L^{-1} + 4M^{2}) \Epsilon(f,f) + 4M^{-1}
\sum_{x \in V'} f(x)^2,
\]
and hence
\be
\lab{starrr}
\sum_{x \in V'} f(x)^2
\leq 
(1 - 4M^{-1})^{-1} (\gamma_L^{-1} + 4M^{2}) \Epsilon(f,f).
\ee
Recall that 
$(1 - 4M^{-1})^{-1} \leq (1 - \epsilon)^{-\half}$, 
and note that
since $\gamma_L \to 0$ as $L \to \infty$, 
we have
$\gamma_L^{-1} + 4M^{2} \leq (1 -\epsilon)^{-\half} \gamma_L^{-1}$
for sufficiently large $L$. Combining this with equation (\ref{starrr})
gives
\[
{\Epsilon(f,f) \over 
\sum_{x \in V'} f(x)^2} \geq (1 - \epsilon) \gamma_L,
\]
for sufficiently large $L$. 
It follows that $\lambda_{\RW}(G') \geq (1 - \epsilon) \gamma_L$ for sufficiently 
large $L$ and so the proof is complete.
\end{proof}
%\noindent {\bf Acknowledgments.} 

\end{document}